\newtheorem{dfn}{Définition}
\newtheorem{propo}[dfn]{Proposition}
\newtheorem{theo}[dfn]{Theorem}
\newenvironment{customthm}[1]
  {\innercustomthm}
  {\endinnercustomthm}
\newtheorem{lem}[dfn]{Lemma}
\theoremstyle{remark}
\newtheorem{rem}[dfn]{Remark}
\newtheorem{nota}[dfn]{Notation}
\newcommand{\suite}[2]{\left(#1\right)_{#2}}
\newcommand{\norm}[1]{\left\lVert #1 \right\rVert}
\newcommand{\ens}[1]{\left\{ #1\right\}}
\newcommand{\R}{\mathbb{R}}
\newcommand{\Z}{\mathbb{Z}}	
\newcommand{\N}{\mathbb{N}}
\newcommand{\ent}[1]{\lfloor #1\rfloor}
\newcommand{\f}{\mathcal{F}}
\newcommand{\abs}[1]{\left|#1\right|}
\providecommand{\newoperator}[3]{%
  \newcommand*{#1}{\mathop{#2}#3}}
\newoperator{\re}{\mathrm{Re}}{\,}
\newoperator{\im}{\mathrm{Im}}{\,}
\newcommand{\esper}[1]{\mathbb E\left[#1\right]}
\author{Davide Giraudo \and Dalibor Voln\'y}
\title{A counter example to central limit theorem in Hilbert spaces under a strong 
mixing condition}
\date{\today}
\begin{document}



 \begin{abstract}
We show that in a separable infinite dimensional Hilbert space, uniform integrability 
of the square of the norm of normalized partial sums of a strictly stationary 
sequence, together with a strong mixing condition, does not 
guarantee the central limit theorem. 
 \end{abstract}

\maketitle

\textbf{Keywords:} Central limit theorem ; Hilbert space ; mixing conditions ; 
strictly stationary process 

\textbf{AMS MSC 2010:} 60F05 ; 60G10

\maketitle

\section{Introduction and notations}
Let $(\Omega,\f,\mu)$ be a probability space and $(S,d)$ a separable metric space. We 
say that the sequence of random variables $\suite{X_n}{n\in\Z}$ from $\Omega$ to $S$ 
is \textit{strictly stationary} if for all integer $d$ and all integer $k$, the $d$-
uple $(X_1,\dots,X_d)$ has the same law as $(X_{k+1},\dots,X_{k+d})$. 

Rosenblatt introduced in \cite{MR0074711} the measure of dependence between two 
\\ sub-$\sigma$-algebras $\mathcal A$ and $\mathcal B$: 
\begin{equation*}
\alpha(\mathcal A,\mathcal B):=\sup\ens{\abs{\mu(A\cap B)-\mu(A)\mu(B)},A\in\mathcal 
A, B\in\mathcal B}. 
\end{equation*}
Another one is $\beta$-mixing, which is defined by 
\begin{equation*}
\beta(\mathcal A,\mathcal B):=\frac 12\sup\sum_{i=1}^I\sum_{j=1}^J\abs{\mu(A_i\cap 
B_j)-\mu(A_i)\mu(B_j)}, 
\end{equation*}
where the supremum is taken over the finite partitions $\ens{A_1,\dots,A_I}$ and 
$\ens{B_1,\dots,B_J}$ of $\Omega$, which consist respectively of elements of 
$\mathcal A$ and $\mathcal B$. It was introduced by Volkonskii and Rozanov in 
\cite{MR0105741}.

In order to measure dependence of a sequence of random variables, say $X:=\suite{X_j}
{j\in\Z}$ (assumed strictly stationary for simplicity), we define $\f_m^n$ as the 
$\sigma$-algebra generated by the $X_j$ for $m\leqslant j\leqslant n$, where 
$-\infty\leqslant m\leqslant n\leqslant +\infty$. 

Then mixing coefficients are defined by 
\begin{equation}\label{alpha_mix_def}
\alpha_X\left(n\right):=\alpha\left(\f_{-\infty}^0,\f_n^{+\infty}\right)
\end{equation}
\begin{equation}
\beta_X\left(n\right):=\beta\left(\f_{-\infty}^0,\f_n^{+\infty}\right),
\end{equation}
which will be simply writen $\alpha(n)$ (respectively $\beta(n)$) when there is no 
ambiguity. 

We say that the strictly stationary sequence $\suite{X_j}j$ is 
\textit{$\alpha$-mixing} (respectively \textit{$\beta$-mixing)} if $\lim_{n\to \infty}\alpha(n)=0$ 
(respectively $\lim_{n\to \infty}\beta(n)=0$). Sequences which are $\alpha$-mixing 
are also called \textit{strong-mixing}. Notice 
that the inequality $2\alpha(\mathcal A,\mathcal B)\leqslant
\beta(\mathcal A,\mathcal B)$ for any two sub-$\sigma$-algebras $\mathcal A$ and 
$\mathcal B$ implies that each $\beta$-mixing sequence is strong mixing. We refer the 
reader to Bradley's book \cite{MR2325294} for further information about mixing 
conditions. 

\bigskip 

Let $(V,\norm\cdot)$ be a separable normed space. We can represent a strictly 
stationary sequence $\suite{X_j}j$ by $X_j=f\circ T^j$, where $T\colon\Omega\to 
\Omega$ is measurable and measure preserving, that is, $\mu(T^{-1}(S))=\mu(S)$ for 
all $S\in\f$ (see \cite{MR0058896}, p.456, second paragraph). 

Given an integer $N$, we define $\displaystyle S_N(f):=\sum_{j=0}^{N-1}f\circ T^j$ and 
$(\sigma_N(f))^2:=\esper{\norm{S_N(f)}^2}$. 

When $V=\R^d$, $d\in\N^*$ it is well-known that if $\suite{f\circ T^j}{j\geqslant 0}$ 
satisfies the following assumptions:
\begin{enumerate}
\item\label{H1} $\lim_{N\to +\infty}\sigma_N(f)=+\infty$;
\item\label{H2} $\int fd\mu=0$:
\item\label{H3} $\lim_{n\to +\infty}\alpha(n)=0$;
\item\label{H4} the family $\ens{\frac{\norm{S_N(f)}^2}{(\sigma_N(f))^2},N\geqslant 
1}$ is uniformly integrable,
\end{enumerate}
then $\suite{\frac 1{\sigma_N(f)}S_N(f)}{N\geqslant 1}$ converges in distribution to 
a Gaussian law. It was established for $d=1$ by Denker \cite{MR899993}, Mori and Yoshihara 
\cite{MR886062}   using a 
blocking argument. Voln\'y \cite{MR936028} gave a proof for $d$ arbitrary based 
on approximation by an array of independent random variables. 

\bigskip 

A natural question would be: what if we replace $\R^d$ by another normed space? 

\bigskip 

First, we restrict ourselves to separable normed spaces in order to avoid 
measurability issues of sums of random variables. Corollary 10.9. in \cite{MR1102015} 
asserts that a separable Banach space $B$ with norm $\norm{\cdot}_{B}$ is isomorphic 
to a Hilbert space if and only if for all random variables $X$ with values in $B$, the 
conditions $\esper{\mathbf X}=0$ and $\esper{\norm{\mathbf X}_B^2}<\infty$ are 
necessary and sufficient for $X$ to satisfy the central limit theorem. By 
"$\mathbf X$ satisfies the CLT", we mean that if $\suite{\mathbf {X_j}}{j\geqslant 
1}$ is a sequence of independent random variables, with the same 
law as $X$, the sequence $\suite{n^{-1/2}\sum_{j=1}^n\mathbf{X_j}}{n\geqslant 1}$ 
weakly converges in $B$. Hence we cannot expect a generalization in 
a class larger than separable Hilbert spaces. Such a space is necessarily isomorphic 
to $\mathcal H:=\ell^2(\R)$, the space of square sumable sequences $\suite{x_n}
{n\geqslant 1}$ endowed with the inner product $\langle 
x,y\rangle_{\mathcal H}:=\sum_{n=1}^{+\infty}x_ny_n$. We shall denote by $
\mathbf{e_n}$ the sequence whose all terms are $0$, 
except the $n$-th which is $1$. Bold letters denote both randoms variables taking 
their values in $\mathcal H$ and elements of this space.

General considerations about probability measures and central limit theorem in Banach 
spaces are contained in Araujo and Gin\'e's book \cite{MR576407}.

\begin{nota}
 If $\suite{a_n}{n\geqslant 1}$, $\suite{b_n}{n\geqslant 1}$ are sequences of non-
 negative real numbers, 
$a_n\lesssim b_n$ means that $a_n\leqslant C b_n$, where $C$ does not depend on $n$. 
In an analogous way, we define $
a_n\gtrsim b_n$. When $a_n\lesssim b_n\lesssim a_n$, we simply write $a_n\asymp b_n$. 
\end{nota}

Our main results are 
\begin{customthm}{A}\label{thmA}
There exists a probability space $(\Omega,\f,\mu)$ such that given $0<q<1$, we can 
construct a strictly stationary sequence $\mathbf X=(\mathbf{f}\circ 
T^k)=\suite{\mathbf{X_k}}{k\in\N}$ defined on $\Omega$, taking its values in 
$\mathcal H$, such that:
\begin{enumerate}
\item[a)]\label{expect} $\esper{\mathbf{f}}=0$, $\esper{\norm{\mathbf{f}}^p_{\mathcal 
H}}$ is finite for each $p$;
\item[b)]\label{div_dev} the limit $\lim_{N\to \infty}\sigma_N(\mathbf{f})$ is infinite;
\item[c)]\label{mixing} the process $\suite{\mathbf{X_k}}{k\in\N}$ is $\beta$-mixing, 
more precisely, $\beta_{\mathbf X}(j)=O\left(\frac 1{j^q}\right)$; 
\item[d)]\label{UI} the family $\ens{\frac{\norm{S_N(\mathbf{f})}_{\mathcal H}^2}
{\sigma_N^2(\mathbf{f})},N\geqslant 1}$ is uniformly integrable;
\item[e)]\label{tightness} if $I\subset \N$ is infinite, the family $\ens{\frac{S_N(
\mathbf{f})}{\sigma_N(\mathbf{f})},N\in I}$ is not 
tight in $\mathcal H$; furthermore, given a sequence $\suite{c_N}{N\geqslant 1}$ of 
real numbers going to infinity, we have either
  \begin{itemize}
  \item  $\lim_{N\to +\infty}\frac{\sigma_N(\mathbf{f})}{c_N}=0$, hence 
  $\suite{\frac{S_N(\mathbf{f})}{c_N}}{N\geqslant 1}$ converges to $
\mathbf{0}_{\mathcal H}$ in distribution, or
  \item  $\limsup_{N\to +\infty}\frac{\sigma_N(\mathbf{f})}{c_N}>0$, and in this case 
  the collection $\ens{\frac{S_N(\mathbf{f})}{c_N},N\geqslant 1}$ is not tight. 
  \end{itemize}
\end{enumerate}
\end{customthm}

\begin{customthm}{A'}\label{thmA'}
Let $(b_N)_{N\geqslant 1}$ and $(h_N)_{N\geqslant 1}$ be sequences 
of positive real numbers, with $\lim_{N\to \infty}b_N=0$ and 
$\lim_{N\to\infty}h_N=\infty$. Then there exists a strictly stationary 
sequence $\mathbf X:=(\mathbf{f}\circ T^k)_{k\in\N}=(\mathbf{X_k})_{k\in\N}$ of 
random variables with values in $\mathcal H$ such that \ref{expect}, 
\ref{UI}, \ref{tightness} of Theorem~\ref{thmA} and the following two properties 
hold:
\begin{enumerate}
\item[b')] \label{div_dev'} 
we have $\sigma_N^2(\mathbf{f})\lesssim N\cdot h_N$ and 
$\frac{\sigma_N^2(\mathbf{f})}N\to\infty$;
\item[c')] \label{mixing'} 
the process $\suite{\mathbf{X_k}}{k\in\N}$ is $\beta$-mixing, and there is an 
increasing sequence $(n_k)_{k\geqslant 1}$ of integers such that for each $k$, 
$\beta_{\mathbf X}(n_k)\leqslant b_{n_k}$.
\end{enumerate}
\end{customthm}

\begin{rem}\label{remark_on_thm_MP06}
 Theorem~\ref{thmA} shows that Denker's result does not remain valid 
 in its full generality in the context of Hilbert space valued 
 random variables. 
 
 Furthermore, a careful analysis of the proof of Proposition~\ref{growth_of_variance} 
 shows that for the construction given in Theorem~\ref{thmA}, 
 we have $\sigma_N^2(f)=N\cdot h(N)$ with $h$ slowly varying in the 
 strong sense. Theorem~1 
 of \cite{MR2280514} does not remain valid in the Hilbert space setting. Indeed, the 
 arguments given in pages~654-655 show that the conditions of Denker's theorem 
 together with the assumption that $\sigma_N^2=N\cdot h(N)$ with $h$ slowly varying 
 in the strong sense imply those of Theorem~1. These arguments are still true 
 in the Hilbert space setting.

\end{rem}

\begin{rem}
 Theorem~\ref{thmA'} gives a control of the mixing coefficients on a subsequence. 
 When $b_N:=N^{-2}$ for example, the construction gives a better estimation for the 
 considered subsequence than what we get by Theorem~\ref{thmA}.
\end{rem}

Tone has established in \cite{MR2754401} a central limit theorem for strictly 
stationary random fields with values in $\mathcal H$ under $\rho'$-mixing conditions. 
For sequences, these coefficients are defined by 
\begin{equation*}
\rho'(n):=\sup\ens{\frac{\abs{\esper{\langle \mathbf{f},\mathbf{g}\rangle_{\mathcal 
H}}-\langle\esper{\mathbf{f}},\esper{\mathbf{g}}\rangle_{\mathcal 
H}}}{\norm{\mathbf{f}}_{\mathbb{L}^2(\mathcal 
H)}\norm{\mathbf{g}}_{\mathbb{L}^2(\mathcal H)}}},
\end{equation*}
where the supremum is taken over all the non-zero functions $\mathbf{f}$ and 
$\mathbf{g}$ such that $\mathbf{f}$ and $\mathbf{g}$ are respectively 
$\sigma(X_j,j\in S_1)$ and $\sigma(X_j,j\in S_2)$-measurable, where $S_1$ and $S_2$ 
are such that $\min_{s\in S_1,t\in S_2}\abs{s-t}\geqslant n$, while 
$\mathbb{L}^2(\mathcal H)$ denote the collection of equivalence classes of random 
variables $\mathbf{X}\colon \Omega\to \mathcal H$ such that 
$\norm{\mathbf{X}}_{\mathcal H}^2$ is integrable.

So "interlaced index sets" can be considered, which is not the case for $\alpha$ and 
$\beta$-mixing coefficient. 
Taking $f$ and $g$ as characteristic functions of elements of $\f_{-\infty}^0$ and 
$\f_n^{+\infty}$ respectively, one can see that $\alpha(n)\leqslant \rho'(n)$, hence 
$\rho'$-mixing condition is more restrictive than 
$\alpha$-mixing condition.

A partial generalization of the finite dimensional result was proved by Politis and 
Romano \cite{MR1309424}, namely, the conditions $\mathbb E\norm{\mathbf 
{X_1}}_{\mathcal H}^{2+\delta}$ finite for some positive $\delta$ and $\sum_j 
\alpha_{\mathbf X}(j)^{\frac{\delta}{2+\delta}}$ guarantees the convergence of 
$n^{-1/2}\sum_{j=1}^n\mathbf{X_j}$ to a Gaussian random 
variable $\mathbf{\mathcal N}$, whose covariance operator $S$ satisfies
\begin{equation*}
\esper{\langle \mathbf{\mathcal N},h\rangle^2}=\langle Sh,h\rangle_{\mathcal 
H}=\operatorname{Var}(\langle 
\mathbf{X_1},h\rangle)+2\sum_{i=1}^{+\infty}\operatorname{Cov}\left(\langle \mathbf 
{X_1},h\rangle,\langle\mathbf{X_{1+i}},h\rangle\right).
\end{equation*}

Similar results were obtained by Dehling \cite{MR705631}.

Rio's inequality \cite{MR1251142} asserts that given two real valued random variables 
$X$ and $Y$ with finite two order moments, 
\begin{equation*}
\abs{\esper{XY}-\esper{X}\esper{Y} }\leqslant 2\int_0^{\alpha\left(\sigma(X),
\sigma(Y)\right)}Q_X(u)Q_Y(u)\mathrm du.
\end{equation*} 
It was extented by Merlev\`ede et al. \cite{MR1468399}, namely, if $\mathbf X$ and 
$\mathbf Y$ are two random variables with values in $\mathcal H$, with respective 
quantile function $Q_{\norm{\mathbf X}_{\mathcal H}}$ and $Q_{\norm{\mathbf 
Y}_{\mathcal H}}$, then 
\begin{equation*}
\abs{\esper{\langle\mathbf X,\mathbf Y\rangle_{\mathcal H}}-\langle \esper{\mathbf 
X},\esper{\mathbf Y}\rangle_{\mathcal H}}\leqslant 
18\int_0^{\alpha} Q_{\norm{\mathbf X}_{\mathcal H}}Q_{\norm{\mathbf Y}_{\mathcal 
H}}\mathrm du,
\end{equation*}
where $\alpha:=\alpha(\sigma(\mathbf X),\sigma(\mathbf Y))$.

From this inequality, they deduce a central limit theorem for a stationary sequence 
$\suite{\mathbf{X_j}}{j\in\Z}$ of $\mathcal H$-valued 
zero-mean random variables satisfying 
\begin{equation}\label{DMR_cond}
\int_0^1\alpha^{-1}(u)Q^2_{\norm{\mathbf{X_0}}_{\mathcal H}}(u)\mathrm du<\infty,
\end{equation}
where $\alpha^{-1}$ is the inverse function of $x\mapsto \alpha_{\mathbf X}(\ent x)$.

Discussion after Corollary 1.2 in \cite{MR2117923} proves that the later result 
implies Politis' one.

Relative optimality of condition \eqref{DMR_cond} (cf. \cite{MR1262892}) can give a 
finite-dimensional counter-example to the central limit theorem when this condition 
is not satisfied. Here, the condition of uniform integrability prevents such 
counter-examples.

Defining $\alpha_{2,\mathbf X}(n):=\sup_{i\geqslant j\geqslant 
n}\alpha(\f_{-\infty}^0,\sigma(\mathbf{X_i},\mathbf{X_j}))$ and $Q_{X_0}$ the 
right-continuous inverse of the function $t\mapsto \mu\ens{\norm {\mathbf 
{X_0}}_{\mathcal H}>t}$ (that is, 

$Q_{\mathbf{X_0}}(u):=\inf\ens{t\in\R,
\mu\ens{\norm{\mathbf{X_0}}_{\mathcal H}>t}\leqslant u}$), Dedecker and Merlev\`ede 
have shown in \cite{MR2731073} that under the assumption 
\begin{equation*}
\sum_{k=1}^{+\infty}\int_0^{\alpha_{2,\mathbf X}(k)}Q_{\mathbf {X_0}}^2(u)\mathrm 
du<\infty,
\end{equation*}
we can find a sequence $\suite{\mathbf{Z_i}}{i\in\N}$ of Gaussian random variables 
with values in $\mathcal H$ such that almost surely, 
\begin{equation*}
\norm{\mathbf{S_n}-\sum_{i=1}^n\mathbf{Z_i}}_{\mathcal H}=o\left(\sqrt{n\log\log 
n}\right).
\end{equation*}
\section{The proof}

\subsection{Construction of $f$}

In order to construct a counter-example, we shall need the following lemma, which 
will be proved later. 

We will denote $U$ the Koopman operator associated to $T$, which acts on measurable 
functions by $U(f)(x):=f(T(x))$.

\begin{lem}\label{exist_syst}
 Let $\suite{u_k}{k\geqslant 1}\subset (0,1)$ be a sequence of numbers. Then there 
 exists a dynamical system $(\Omega,\f,\mu,T)$ and a sequence of random variables 
 $\suite{\xi_k}{k\geqslant 1}$ such that 
\begin{enumerate}
 \item for each $k\geqslant 1$, $\mu(\xi_k=1)=\mu(\xi_k=-1)=\frac{u_k}2$ and 
 $\mu(\xi_k=0)=1-u_k$;
 \item the random variables $(U^i\xi_k,k\geqslant 1,i\in \Z)$ are mutually 
 independent.
\end{enumerate}

\end{lem}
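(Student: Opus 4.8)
The plan is to build the dynamical system as an infinite product of shift spaces, one for each index $k\geqslant 1$, so that the required independence is automatic. Concretely, for each $k$ let $(\Omega_k,\f_k,\mu_k)$ be the two-sided product space $\ens{-1,0,1}^{\Z}$ equipped with the product of the probability on $\ens{-1,0,1}$ assigning mass $u_k/2$ to $\pm 1$ and $1-u_k$ to $0$, and let $T_k$ be the left shift on $\Omega_k$, which is measure preserving. Then set $(\Omega,\f,\mu):=\bigotimes_{k\geqslant 1}(\Omega_k,\f_k,\mu_k)$ and let $T:=\bigotimes_{k\geqslant 1}T_k$ be the product map acting coordinatewise; as a product of measure-preserving maps it is measure preserving on $(\Omega,\f,\mu)$. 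Define $\xi_k\colon\Omega\to\R$ to be the $0$-th coordinate of the $k$-th factor, i.e. $\xi_k(\omega)=\omega^{(k)}_0$ where $\omega=(\omega^{(k)})_{k\geqslant 1}$ and $\omega^{(k)}=(\omega^{(k)}_i)_{i\in\Z}\in\Omega_k$.

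With this definition the first property is immediate: $\xi_k$ has exactly the prescribed three-point law by construction of $\mu_k$. For the second, note that $U^i\xi_k(\omega)=\xi_k(T^i\omega)=(T_k^i\omega^{(k)})_0=\omega^{(k)}_i$, so $U^i\xi_k$ is simply the $i$-th coordinate of the $k$-th factor. The family $\ens{(k,i):k\geqslant 1,i\in\Z}$ therefore indexes a family of functions each depending on a distinct coordinate of the product space $\prod_{k}\ens{-1,0,1}^{\Z}$, and mutual independence of coordinate projections under a product measure is exactly the defining property of the product measure. This gives mutual independence of $(U^i\xi_k)_{k\geqslant 1,\,i\in\Z}$.

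A few routine points remain to be checked but present no real difficulty: separability of $\Omega$ (a countable product of Polish spaces is Polish, so one stays within the setting needed to avoid measurability problems), measurability of $T$ and of each $\xi_k$ (coordinate maps and shifts are continuous, hence measurable), and the fact that $T$ preserves $\mu$ (it suffices to check this on measurable rectangles depending on finitely many factors and finitely many coordinates, where it reduces to the shift-invariance of each $\mu_k$, and then extend by a monotone class argument). I do not expect any genuine obstacle here: the lemma is essentially a bookkeeping statement asserting that one can realize a prescribed independent family inside a single dynamical system, and the product-of-Bernoulli-shifts construction is the standard device for doing so. The only mild subtlety is to present the Koopman-operator identity $U^i\xi_k=\,$"$i$-th coordinate of factor $k$" cleanly enough that the independence claim becomes a one-line consequence of Fubini/Tonelli for infinite products, which is how I would organize the writeup.
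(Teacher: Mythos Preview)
Your proposal is correct and follows essentially the same approach as the paper: both build $\Omega$ as a product space indexed by $\N^*\times\Z$, let $T$ act as the shift in the $\Z$-coordinate, and define $\xi_k$ as a function of the single coordinate $(k,0)$ so that $U^i\xi_k$ reads off coordinate $(k,i)$ and independence is immediate from the product structure. The only cosmetic difference is that the paper takes the fiber to be $[0,1]$ with Lebesgue measure and defines $\xi_k$ via indicators of three subintervals of lengths $u_k/2$, $u_k/2$, $1-u_k$, whereas you take the fiber to be $\{-1,0,1\}$ with the three-point law directly; this changes nothing of substance.
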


Recall that $\mathbf{e_k}$ is the $k$-th element of the canonical orthonormal system 
of $\mathcal H=\ell^2(\R)$. We define 
\begin{equation}
f_k:=\sum_{i=0}^{n_k-1}U^{-i}\xi_k\mbox{ and 
}\mathbf{f}:=\sum_{k=1}^{+\infty}f_k\mathbf{e_k},
\end{equation}
where the $\xi_i$'s are constructed using to Lemma~\ref{exist_syst} taking 
$u_k:=n_k^{-2}$.
Conditions on the increasing sequence of integers $\suite{n_k}{k\geqslant 1}$ will be 
specified latter.

Then $\mathbf{X_k}:=\mathbf{f}\circ T^k$ is a strictly stationary sequence. 
Note that $\norm{\mathbf f}_{\mathcal H}^2$ is an integrable random variable whenever 
$\sum_k\frac 1{n_k}$ is convergent. In the sequel, the choice of $n_k$ will guarantee this condition. 

\subsection{Preliminary results}

We express $S_N(f_k)$ as a linear combination of independent random variables. 
By direct computations, we get   
\begin{equation}\label{expre_f_k}
f_k=n_k\xi_k+(I-U)\sum_{i=1-n_k}^{-1}(n_k+i)U^{i}\xi_k,
\end{equation}
hence 
\begin{align*}
S_N(f_k)&=n_k\sum_{j=0}^{N-1}U^j\xi_k+ \sum_{i=1-n_k}^{-1}(n_k+i)U^{i}\xi_k-
\sum_{i=N-n_k+1}^{N-1}(n_k+i-N)U^i\xi_k.
\end{align*}
This formula can be simplified if we distinguish the cases $N\geqslant n_k$ and 
$n_k<N$ (we break the third sum at the index $i=0$ if necessary). This gives 
\begin{multline}\label{sum_expre_N<n_k}
 S_N(f_k)=\sum_{j=0}^{N-1}(N-j)U^j\xi_k+\sum_{j=1-n_k}^{N-n_k}(n_k+j)U^j\xi_k\\
+N\sum_{j=1+N-n_k}^{-1}U^j\xi_k, \quad \mbox{if } N<n_k,
\end{multline}
\begin{multline}\label{sum_expre_N>>n_k}
 S_N(f_k)=n_k\sum_{j=0}^{N-n_k}U^j\xi_k+\sum_{j=N-n_k+1}^{N-1}(N-j)U^j\xi_k\\
+\sum_{j=1-n_k}^{-1}(n_k+j)U^j\xi_k, \quad\mbox{ if }N\geqslant n_k.
\end{multline}


The computation of the expectation of the square of partial sums gives 
\begin{equation}\label{standard_dev}
\sigma_N^2(f_k)=\begin{cases}
\frac 1{n_k^2}\displaystyle\left(2\sum_{j=1}^Nj^2+(n_k-N-1)N^2\right) &\mbox{ if 
}N<n_k,\\
\frac 1{n_k^2}\displaystyle\left(n_k^2(N-n_k+1)+2\sum_{j=1}^{n_k-1}j^2\right)&\mbox{ 
if }N\geqslant n_k.
\end{cases}
\end{equation}

\begin{nota}
If $N$ is a positive integer and $\suite{n_k}{k\geqslant 1}$ is an increasing 
sequence of integers, denote by $i(N)$ the unique integer for which 
$n_{i(N)}\leqslant N<n_{i(N)+1}$. 
\end{nota}

\begin{propo}\label{growth_of_variance}
 Assume that $(n_k)_{k\geqslant 1}$ satisfies the condition 
\begin{equation}\tag{C}\label{lacunarity}
 \mbox{there is }p>1\mbox{ such that for each }k,\quad n_{k+1}\geqslant n_k^p. 
\end{equation}

 Then $\sigma^2_N(\mathbf{f})\asymp N\cdot i(N)$. 
\end{propo}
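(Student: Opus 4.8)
The plan is to exploit the orthonormality of the canonical basis $\{\mathbf e_k\}$: since $S_N(\mathbf f)=\sum_{k\geqslant 1}S_N(f_k)\,\mathbf e_k$, Parseval's identity and monotone convergence give
\[
\sigma_N^2(\mathbf f)=\esper{\norm{S_N(\mathbf f)}_{\mathcal H}^2}=\sum_{k\geqslant 1}\sigma_N^2(f_k),
\]
so the whole problem reduces to estimating the scalar variances in \eqref{standard_dev} and summing them (the series is finite, since $\sum_k 1/n_k<\infty$ and, by the estimates below, its tail is $\asymp N^2\sum_{k\geqslant i(N)+1}1/n_k$).

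First I would record two elementary two-sided estimates with \emph{absolute} implied constants (independent of $k$ and of $N$), both read off from \eqref{standard_dev} using $\tfrac23 m^3\leqslant 2\sum_{j=1}^m j^2\leqslant 2m^3$: if $N\geqslant n_k$, then $\sigma_N^2(f_k)\asymp N$; if $N<n_k$, then $\sigma_N^2(f_k)\asymp N^2/n_k$. For the first, write $\sigma_N^2(f_k)=(N-n_k+1)+\tfrac2{n_k^2}\sum_{j=1}^{n_k-1}j^2$: both summands are $\lesssim N$ (as $n_k\leqslant N$), and if the first is $<N/2$ then $n_k>N/2$, so the second is $\gtrsim n_k\gtrsim N$, which gives the lower bound. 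For the second, the upper bound is immediate from $N/n_k<1$, while combining the two terms of \eqref{standard_dev} yields $\sigma_N^2(f_k)\geqslant\tfrac{N^2}{n_k}-\tfrac{N^3}{3n_k^2}-\tfrac{N^2}{n_k^2}\gtrsim N^2/n_k$, again because $N<n_k$ (which forces $n_k\geqslant 2$, so no degenerate case arises).

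Next, recalling that $i(N)$ is defined, for $N\geqslant n_1$, by $n_{i(N)}\leqslant N<n_{i(N)+1}$, one has $n_k\leqslant N$ for $k\leqslant i(N)$ and $N<n_k$ for $k\geqslant i(N)+1$; splitting the series and applying the two estimates,
\[
\sigma_N^2(\mathbf f)\;\asymp\;\sum_{k=1}^{i(N)}N+N^2\sum_{k\geqslant i(N)+1}\frac1{n_k}\;=\;N\,i(N)+N^2\sum_{k\geqslant i(N)+1}\frac1{n_k}.
\]
Since $\sigma_N^2(\mathbf f)\geqslant\sum_{k\leqslant i(N)}\sigma_N^2(f_k)\gtrsim N\,i(N)$ and $i(N)\geqslant 1$, the only thing left is to check that $N\sum_{k\geqslant i(N)+1}1/n_k$ is bounded by a constant depending on $p$ alone.

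This last point is the single place where the lacunarity hypothesis \eqref{lacunarity} is essential, and I expect it to be the only genuinely non-routine step. Set $m:=i(N)+1$, so $N<n_m$ and in particular $n_m\geqslant 2$; iterating \eqref{lacunarity} gives $n_{m+j}\geqslant n_m^{p^j}$, and since $p^j\geqslant 1+j(p-1)$ we get $n_m/n_{m+j}\leqslant n_m^{1-p^j}\leqslant 2^{-j(p-1)}$, whence
\[
N\sum_{k\geqslant m}\frac1{n_k}\leqslant\sum_{k\geqslant m}\frac{n_m}{n_k}\leqslant 1+\sum_{j\geqslant 1}2^{-j(p-1)}=\frac1{1-2^{-(p-1)}}=:C_p.
\]
Therefore $N^2\sum_{k\geqslant i(N)+1}1/n_k\leqslant C_p\,N\leqslant C_p\,N\,i(N)$, and together with the lower bound this gives $\sigma_N^2(\mathbf f)\asymp N\,i(N)$. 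In summary, the crux is that the super-polynomial growth of $(n_k)$ keeps the total contribution of the high-frequency coordinates $f_k$ with $n_k>N$ of size $O(N)$, so that they cannot dominate the contribution of order $N\,i(N)$ coming from the coordinates with $n_k\leqslant N$; the remaining work is the mildly tedious bookkeeping needed to make the constants in the two pointwise estimates uniform.
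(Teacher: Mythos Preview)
Your proof is correct and follows essentially the same route as the paper: decompose $\sigma_N^2(\mathbf f)=\sum_k\sigma_N^2(f_k)$ via orthonormality, use \eqref{standard_dev} to get $\sigma_N^2(f_k)\asymp N$ for $k\leqslant i(N)$ and $\sigma_N^2(f_k)\asymp N^2/n_k$ for $k>i(N)$, and then show the tail is $O(N)$ from the lacunarity condition. Your tail estimate via $n_{m+j}\geqslant n_m^{p^j}\geqslant n_m\cdot 2^{j(p-1)}$ is in fact slightly cleaner and more explicit than the paper's version, but the two arguments are interchangeable.
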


\begin{proof}
Using \eqref{standard_dev}, the fact that $M^3\asymp \sum_{j=1}^Mj^2$ and 
$\sigma_N^2(\mathbf f)=\sum_{k\geqslant 1}\sigma_N^2(f_k)$, we have 
\begin{equation}\label{expl_sigma}
\sigma_N^2(\mathbf f)\geqslant \sum_{k=1}^{i(N)}\sigma_N^2(f_k)\asymp 
N\sum_{j=1}^{i(N)}1=N\cdot i(N).
\end{equation}
From \eqref{standard_dev} in the case $n_k\geqslant N$, we deduce
\begin{equation}
 \sum_{k\geqslant i(N)+1}\sigma_N^2(f_k)\lesssim \sum_{k\geqslant i(N)+1}\frac{N^2}
 {n_k}\leqslant \frac{N^2}{n_{i(N)+1}}+\sum_{k\geqslant i(N)+1}\frac{N^2}{n_k}\frac 
 1{n_k^{p-1}}.
\end{equation}
Since $n_{i(N)+1}\geqslant N$ and the series $\sum_{k\geqslant 1}n_k^{1-p}$ is 
convergent (by the ratio test), we obtain 
\begin{equation}\label{bound_sigma}
 \sum_{k\geqslant i(N)+1}\sigma_N^2(f_k)\lesssim N+N\sum_{k\geqslant i(N)+1}\frac 
 1{n_k^{p-1}}\lesssim N.
\end{equation} 
Combining \eqref{expl_sigma} and \eqref{bound_sigma}, we get 
\begin{equation}
N\cdot i(N)\lesssim \sigma_N^2(\mathbf f)\lesssim 
\sum_{k=1}^{i(N)}\sigma_N^2(f_k)+\sum_{k\geqslant 
i(N)+1}\sigma_N^2(f_k)\lesssim N\cdot i(N)+N\lesssim N\cdot i(N).
\end{equation}
\end{proof}

\begin{propo}\label{condition_moments}
 Assume that $\sum_kn_k^{-a}$ is convergent for any positive real number $a$. Then 
 for each integer $p$, $\norm{\mathbf f}_{\mathcal H}$ has a finite moment of order 
 $p$.
\end{propo}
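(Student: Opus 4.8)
The plan is to reduce the statement to bounding the even moments $\esper{\norm{\mathbf f}_{\mathcal H}^{2m}}$ for $m\in\N$; this is enough, since for any integer $m\geqslant p/2$ one has $\norm{\mathbf f}_{\mathcal H}^p\leqslant 1+\norm{\mathbf f}_{\mathcal H}^{2m}$. Because $(\mathbf{e_k})_{k\geqslant 1}$ is orthonormal, $\norm{\mathbf f}_{\mathcal H}^2=\sum_{k\geqslant 1}f_k^2$, and the key observation is the deterministic bound $\abs{f_k}\leqslant N_k$, where $N_k:=\card\ens{0\leqslant i\leqslant n_k-1 : U^{-i}\xi_k\neq 0}$ counts the nonzero terms in the sum defining $f_k$ (each such term being $\pm 1$). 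By Lemma~\ref{exist_syst}, $N_k$ follows the binomial law with parameters $n_k$ and $u_k=n_k^{-2}$, so $\esper{N_k}=n_k u_k=n_k^{-1}$; moreover $f_k$ is a function of $(U^{-i}\xi_k)_{0\leqslant i\leqslant n_k-1}$ alone, so the family $\suite{N_k}{k\geqslant 1}$ is independent. It thus suffices to prove $\esper{\left(\sum_{k\geqslant 1}N_k^2\right)^m}<\infty$ for every $m$.

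The next step is a uniform moment estimate for a single $N_k$: for each integer $r\geqslant 1$ there is a finite constant $C_r$, depending on $r$ only, with $\esper{N_k^r}\leqslant C_r\,n_k^{-1}$. This follows from $\binom{n_k}{\ell}u_k^{\ell}\leqslant (n_ku_k)^{\ell}/\ell!=n_k^{-\ell}/\ell!$, which gives, using $n_k\geqslant 2$, that $\esper{N_k^r}\leqslant\sum_{\ell\geqslant 1}\ell^r\binom{n_k}{\ell}u_k^{\ell}\leqslant n_k^{-1}\sum_{\ell\geqslant 1}\frac{\ell^r}{\ell!}=:C_r\,n_k^{-1}$. (Equivalently, the falling factorial moments of $N_k$ are at most $(n_ku_k)^j=n_k^{-j}\leqslant n_k^{-1}$.)

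Finally, I would expand $\left(\sum_k N_k^2\right)^m=\sum_{k_1,\dots,k_m\geqslant 1}N_{k_1}^2\cdots N_{k_m}^2$ and take expectations term by term, which is legitimate as all summands are nonnegative. For a tuple $(k_1,\dots,k_m)$ whose distinct entries are $j_1<\dots<j_s$ with respective multiplicities $r_1,\dots,r_s$ (so $r_1+\dots+r_s=m$), independence together with the previous estimate gives $\esper{N_{k_1}^2\cdots N_{k_m}^2}=\prod_{t=1}^s\esper{N_{j_t}^{2r_t}}\lesssim_m\prod_{t=1}^s n_{j_t}^{-1}$. Since at most $m!$ tuples share a given such structure, summing over all structures yields $\esper{\left(\sum_k N_k^2\right)^m}\lesssim_m\sum_{s=1}^m\frac1{s!}\left(\sum_{k\geqslant 1}n_k^{-1}\right)^s$, which is finite because $\sum_{k\geqslant 1}n_k^{-1}<\infty$; note that only the case $a=1$ of the hypothesis is used.

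The computations are essentially bookkeeping; the one point requiring a little care is the single-variable estimate $\esper{N_k^r}\lesssim_r n_k^{-1}$, since the trivial bound $N_k\leqslant n_k$ only yields $\esper{N_k^r}\leqslant n_k^{r-1}\esper{N_k}=n_k^{r-2}$, already too weak for $r=2$, so one genuinely has to exploit that $N_k$ is binomial with the small success probability $n_k^{-2}$.
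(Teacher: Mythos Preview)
Your proof is correct and takes a genuinely different route from the paper's. The paper applies Rosenthal's inequality directly to $f_k=\sum_{i=0}^{n_k-1}U^{-i}\xi_k$, a sum of i.i.d.\ centered variables each having all absolute moments equal to $u_k=n_k^{-2}$, to obtain $\esper{\abs{f_k}^{2p}}\lesssim n_k^{-1}+n_k^{-p}\lesssim n_k^{-1}$; the passage to $\esper{\norm{\mathbf f}_{\mathcal H}^{2p}}<\infty$ is then implicit via Minkowski's inequality, $\norm{\sum_k f_k^2}_{p}\leqslant\sum_k\bigl(\esper{\abs{f_k}^{2p}}\bigr)^{1/p}\lesssim\sum_k n_k^{-1/p}$, which invokes the hypothesis for $a=1/p$. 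You instead dominate $\abs{f_k}$ pointwise by the binomial count $N_k$, establish $\esper{N_k^r}\lesssim_r n_k^{-1}$ by an elementary computation, and then exploit the independence of the $N_k$'s across $k$ through a direct multinomial expansion. Your argument is more self-contained (no appeal to Rosenthal) and, as you note, uses only the case $a=1$ of the hypothesis; the paper's argument is shorter but leans on a named inequality and on the full strength of the assumption.
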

\begin{proof}
We shall use Rosenthal's inequality (Theorem~3, \cite{MR0271721}): there exists a 
constant $C$ depending only on $q$ such that if $M$ is an 
integer, $Y_1,\dots,Y_M$ are independent real valued zero mean random variables for 
which $\mathbb E\abs{Y_i}^q<\infty$ for each $i$, then   
\begin{equation}\label{Rosenthal}
\mathbb E\abs{\sum_{j=1}^MY_j}^q\leqslant 
C\left(\sum_{j=1}^M\mathbb  
E\abs{Y_j}^q+\left(\sum_{j=1}^M\esper{Y_j^2}\right)^{q/2}\right).
\end{equation}
If $q=2p$ is given then we have 
\begin{equation}
 \mathbb E\abs{f_k}^{2p}\lesssim n_k^{-1}+n_k^{-p}\lesssim n_k^{-1}.
\end{equation}
\end{proof}

We provide a sufficient condition for the uniform integrability of the family 
$\mathcal S:=\ens{\frac{\norm{S_N(\mathbf f)}_{\mathcal H}^2}{\sigma^2_N(
\mathbf f)},N\geqslant 1}$.

\begin{propo}\label{condition_for_UI}
If $(n_k)_{k\geqslant 1}$ satisfies \eqref{lacunarity}, then $\mathcal S$ is 
uniformly integrable. 
\end{propo}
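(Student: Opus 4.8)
The plan is to deduce uniform integrability of $\mathcal S$ from three elementary facts: (i) a family of non-negative random variables bounded in $\el^2$ is uniformly integrable; (ii) a family of non-negative random variables whose expectations tend to $0$ is uniformly integrable (apply Markov's inequality for the large indices, and uniform integrability of the finitely many small ones); (iii) a finite sum of uniformly integrable families is uniformly integrable. Since $S_N(\mathbf f)=\sum_{k\geqslant1}S_N(f_k)\mathbf{e_k}$, we have $\norm{S_N(\mathbf f)}_{\mathcal H}^2=\sum_{k\geqslant1}\pa{S_N(f_k)}^2$, and the idea is to split this series into a ``bulk'' consisting of the first blocks, which after normalisation by $\sigma_N^2(\mathbf f)$ will be bounded in $\el^2$, and a ``remainder'' consisting of the last few blocks together with the tail, whose normalised expectation will tend to $0$.

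Fix once and for all an integer $K$ with $p^K\geqslant2$, where $p>1$ is the exponent of condition~\eqref{lacunarity}. Since $i(N)\to\infty$, we have $i(N)>K$ for $N\geqslant n_{K+1}$; the finitely many smaller values of $N$ form a finite family of integrable random variables and may be discarded. For $N\geqslant n_{K+1}$ put
\[
A_N:=\frac1{\sigma_N^2(\mathbf f)}\sum_{k=1}^{i(N)-K}\pa{S_N(f_k)}^2,\qquad R_N:=\frac1{\sigma_N^2(\mathbf f)}\sum_{k>i(N)-K}\pa{S_N(f_k)}^2,
\]
so that $\norm{S_N(\mathbf f)}_{\mathcal H}^2/\sigma_N^2(\mathbf f)=A_N+R_N$. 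Using \eqref{standard_dev} in the case $N\geqslant n_k$ together with $M^3\asymp\sum_{j=1}^Mj^2$, one checks that $\sigma_N^2(f_k)\asymp N$ uniformly over $k\leqslant i(N)$; hence the $K$ summands with $i(N)-K<k\leqslant i(N)$ contribute $\lesssim KN$, while \eqref{bound_sigma} bounds $\sum_{k>i(N)}\sigma_N^2(f_k)$ by $\lesssim N$. Consequently $\esper{R_N}=\sigma_N^{-2}(\mathbf f)\sum_{k>i(N)-K}\sigma_N^2(f_k)\lesssim(K+1)N/\sigma_N^2(\mathbf f)\lesssim1/i(N)\to0$ by \ref{growth_of_variance}, so $\suite{R_N}{N\geqslant n_{K+1}}$ is uniformly integrable by (ii).

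The core of the argument is the estimate $\sup_N\esper{A_N^2}<\infty$. Because the variables $U^i\xi_k$ ($k\geqslant1$, $i\in\Z$) are mutually independent (\ref{exist_syst}), the random variables $\pa{S_N(f_k)}^2$, $k\geqslant1$, are independent, with $\esper{\pa{S_N(f_k)}^2}=\sigma_N^2(f_k)$; expanding the square and bounding the off-diagonal part by $\pa{\sum_k\sigma_N^2(f_k)}^2=\sigma_N^4(\mathbf f)$ gives
\[
\esper{\pa{\sum_{k=1}^{i(N)-K}\pa{S_N(f_k)}^2}^2}\leqslant\sigma_N^4(\mathbf f)+\sum_{k=1}^{i(N)-K}\esper{\pa{S_N(f_k)}^4}.
\]
For $k\leqslant i(N)-K$ we have $n_k\leqslant N$, so by \eqref{sum_expre_N>>n_k} the variable $S_N(f_k)=\sum_j c_{j,k}U^j\xi_k$ is a linear combination of independent centred variables with $\abs{c_{j,k}}\leqslant n_k$; using $\esper{\xi_k^2}=\esper{\xi_k^4}=u_k=n_k^{-2}$, $\sum_j c_{j,k}^2\esper{\xi_k^2}=\sigma_N^2(f_k)$ and $\sigma_N^2(f_k)\asymp N$, Rosenthal's inequality \eqref{Rosenthal} with $q=4$ yields
\[
\esper{\pa{S_N(f_k)}^4}\lesssim\sum_j c_{j,k}^4\esper{\xi_k^4}+\pa{\sum_j c_{j,k}^2\esper{\xi_k^2}}^2\leqslant n_k^2\,\sigma_N^2(f_k)+\pa{\sigma_N^2(f_k)}^2\lesssim n_k^2\,N+N^2.
\]
Iterating \eqref{lacunarity} from index $i(N)-K$ up to $i(N)$ gives $n_{i(N)-K}^{p^K}\leqslant n_{i(N)}\leqslant N$, so by monotonicity of $\suite{n_k}{k\geqslant1}$ and $p^K\geqslant2$ we get $\sum_{k=1}^{i(N)-K}n_k^2\leqslant i(N)\,n_{i(N)-K}^2\leqslant i(N)\,N^{2/p^K}\leqslant i(N)\,N$. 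Hence $\sum_{k=1}^{i(N)-K}\esper{\pa{S_N(f_k)}^4}\lesssim N^2\,i(N)$, and dividing the displayed inequality by $\sigma_N^4(\mathbf f)\asymp N^2\,i(N)^2$ (by \ref{growth_of_variance}) gives $\esper{A_N^2}\leqslant1+C/i(N)$, which is bounded uniformly in $N$.

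Therefore $\suite{A_N}{N\geqslant n_{K+1}}$ is bounded in $\el^2$ and hence uniformly integrable by (i); together with (ii), (iii) and the finitely many discarded indices, this shows that $\mathcal S$ is uniformly integrable. The step I expect to be delicate is precisely the need to peel off the last $K$ blocks before invoking moments: $\esper{\pa{S_N(f_{i(N)})}^4}$ can be as large as a constant times $N^3$ when $n_{i(N)}\asymp N$, far exceeding $\sigma_N^4(\mathbf f)\asymp N^2\,i(N)^2$, so $\norm{S_N(\mathbf f)}_{\mathcal H}^2/\sigma_N^2(\mathbf f)$ need not be bounded in $\el^2$; it is exactly the choice $p^K\geqslant2$ that forces $n_{i(N)-K}^2\lesssim N$ and makes the surviving fourth-moment sum comparable to $\sigma_N^4(\mathbf f)$.
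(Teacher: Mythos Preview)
Your argument is correct. The overall skeleton matches the paper's proof: both peel off the last few coordinates (your $K$ with $p^K\geqslant 2$ is exactly the paper's $i_0$ with $n_{k+i_0}\geqslant n_k^2$), treat those peeled terms by showing their $\mathbb L^1$-norms tend to $0$, and control the bulk via Rosenthal's inequality combined with $\sigma_N^2(\mathbf f)\asymp N\cdot i(N)$.

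Where you diverge is in two technical choices, both of which streamline the argument. First, the paper keeps the far tail $k\geqslant i(N)+2$ as a separate family $\mathcal S_4$ and proves it is bounded in $\mathbb L^p$ via a second application of Rosenthal; you simply absorb this tail into $R_N$ and dispose of it with the same $\mathbb L^1$ estimate \eqref{bound_sigma}, which is shorter. Second, for the bulk the paper bounds each $\norm{(S_N(f_k))^2}_p$ individually and sums via the triangle inequality, working with a general $p\in(1,2]$; you instead exploit the independence of the blocks $(S_N(f_k))^2$ across $k$ to expand $\esper{A_N^2}$ directly, so that the cross terms are dominated for free by $\sigma_N^4(\mathbf f)$ and only the diagonal fourth moments need Rosenthal (with the single exponent $q=4$). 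Your closing remark is also apt: the peeling is not cosmetic, since $\esper{(S_N(f_{i(N)}))^4}$ can be of order $N^3$ and would ruin a naive $\mathbb L^2$ bound. The paper's approach, on the other hand, never uses independence across $k$ and would transfer more readily to settings where the coordinates interact.
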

\begin{proof}
Let $i_0$ be such that for each integer $k$, $n_{k+i_0}\geqslant n_k^2$.
For $N\geqslant 1$, we have:
\begin{equation*}
\frac{\norm{S_N(\mathbf f)}_{\mathcal H}^2}{\sigma^2_N(\mathbf f)}=
\sum_{j=1}^{i(N)-1}\frac{\abs{S_N(f_j)}^2}{\sigma^2_N(\mathbf f)}+
\frac{\abs{S_N(f_{i(N)})}^2}{\sigma^2_N(\mathbf f)}+\frac{\abs{S_N(f_{i(N)+1})}^2}
{\sigma^2_N(\mathbf f)}+\sum_{j\geqslant 
i(N)+2}\frac{\abs{S_N(f_j)}^2}{\sigma^2_N(\mathbf f)},
\end{equation*}
Since $\mathbb E\left[\sum_{j=i-i_0}^{i(N)-1}\frac{\abs{S_N(f_j)}^2}{\sigma^2_N(\mathbf f)}
\right]\leqslant 
\frac{i_0}{i(N)}$ 
it is enough to prove that the families 
\begin{align*}
\mathcal S_1&:=\ens{\sum_{k=1}^{i(N)-i_0}\frac{\abs{S_N(f_k)}^2}{\sigma^2_N(\mathbf 
f)},N\geqslant 1},\\
\mathcal S_2&:=\ens{\frac{\abs{S_N(f_{i(N)})}^2}{\sigma^2_N(\mathbf f)},N\geqslant 
1}=:\ens{u_N,N\geqslant 1},\\
\mathcal S_3&:=\ens{\frac{\abs{S_N(f_{i(N)+1})}^2}{\sigma^2_N(\mathbf f)},N\geqslant 
1}=:\ens{v_N,N\geqslant 1}, \mbox{ and }\\
\mathcal S_4&:=\ens{\sum_{k\geqslant i(N)+2}\frac{\abs{S_N(f_k)}^2}
{\sigma^2_N(\mathbf f)},N\geqslant 1}
\end{align*}
are uniformly integrable. For $\mathcal S_1$ and $\mathcal S_4$, we shall show that 
these families are bounded in 
$\mathbb L^p$ for $p\in (1,2]$ as in \eqref{lacunarity}. 

\begin{itemize}
\item for $\mathcal S_1$: using the expression in \eqref{sum_expre_N>>n_k}  and 
\eqref{Rosenthal} with $q:=2p>2$, we have
\begin{align*}
\esper{\abs{S_N(f_k)}^{2p}}&\leqslant 
C\left(2\sum_{j=1}^{n_k}\frac{j^{2p}}{n_k^2}+\frac{n_k^{2p}(N-n_k)}{n_k^2}\right)+
C\left(2\sum_{j=1}^{n_k}\frac{j^2}{n_k^2}+\frac{(N-n_k)n_k^2}{n_k^2}\right)^p\\
&\lesssim \frac 1{n_k^2}\left(n_k^{2p+1}+(N-n_k)n_k^{2p}\right)+\frac 
1{n_k^{2p}}\left(n_k^3+(N-n_k)n_k^2\right)^p\\
&=\frac{Nn_k^{2p}}{n_k^2}+\frac{N^pn_k^{2p}}{n_k^{2p}}\\
&=Nn_k^{2(p-1)}+N^p
\end{align*}
hence 
\begin{equation*}
\norm{S_N(f_k)^2}_{p}\lesssim N^{1/p}n_k^{2\frac{p-1}p}+N, 
\end{equation*}
which gives 
\begin{align*}
\norm{\sum_{k=1}^{i(N)-i_0}\frac{\abs{S_N(f_k)}^2}{\sigma^2_N(\mathbf f)}}_p
&\lesssim \frac{\sum_{k=1}^{i(N)-i_0}(N^{1/p}n_k^{2\frac{p-1}p}+N)}{\sigma_N^2(\mathbf 
f)}\\
&\lesssim \frac{N^{1/p}i(N)n_{i(N)-i_0}^{2\frac{p-1}p}+Ni(N)}{\sigma_N^2(\mathbf f)}\\
&\lesssim \left(\frac{n_{i(N)-i_0}^2}{N}\right)^{1-1/p}+1
\end{align*}

From \eqref{expl_sigma}, we get 
\begin{equation*}
\norm{\sum_{k=1}^{i(N)-1}\frac{\abs{S_N(f_k)}^2}{\sigma^2_N(\mathbf f)}}_{p}\lesssim  
\frac{n_{i(N)}^{2\frac{p-1}p}}{n_{i(N)}}+1=n_{i(N)}^{\frac{p-2}p}+1.
\end{equation*}
Since $p-2\leqslant 0$, we obtain that $\mathcal S_1$ is bounded in $\mathbb L^p$ 
hence uniformly integrable. 

\item for $\mathcal S_2$: using \eqref{sum_expre_N>>n_k} in the case $n_k\leqslant N$ 
and Proposition~\ref{growth_of_variance}, we get 
\begin{equation}
\norm{u_N}_1\lesssim \frac{N}{\sigma_N^2(\mathbf f)}\lesssim \frac 1{i(N)}.
\end{equation}
 Since $\norm{u_N}_1\to 0$ and $u_N\in\mathbb L^1$ for each $N$, the family 
 $\mathcal S_2$ is uniformly integrable.  

\item for $\mathcal S_3$: using \eqref{sum_expre_N<n_k} in the case $n_k> N$ and 
Proposition~\ref{growth_of_variance}, we get 
\begin{equation}
\norm{v_N}_1\lesssim \frac{N^2}{n_{i(N)+1}\sigma_N^2(\mathbf f)}\lesssim \frac{N}
{N\cdot i(N)}.
\end{equation}
 Since $\norm{v_N}_1\to 0$ and $v_N\in\mathbb L^1$ for each $N$, the family 
 $\mathcal S_3$ is uniformly integrable.  

\item for $\mathcal S_4$: as for $\mathcal S_1$, we shall show that this family is 
bounded in $\mathbb{L}^p$ with $p\in (1,2]$. We have, using \eqref{sum_expre_N<n_k} and 
\eqref{Rosenthal} 
\begin{align*}
\esper{\abs{S_N(f_k)}^{2p}}&\lesssim \frac 1{n_k^2}(N^{2p+1}+N^{2p}(n_k-N))+\frac 
1{n_k^{2p}}(N^3+(n_k-N)N^2)^{p}\\
&=\frac{N^{2p}}{n_k}+\frac{N^{2p}}{n_k^p}\\
&\lesssim \frac{N^{2p}}{n_k}
\end{align*}
as $N\leqslant n_k$. We thus get that 
\begin{equation*}
\norm{\sum_{k\geqslant i(N)+2}\abs{S_N(f_k)}^2}_{p}
\lesssim  N^2\sum_{k\geqslant i(N)+2}\frac 1{n_k^{1/p}}.
\end{equation*}
Also, using \eqref{standard_dev}, we have 
\begin{equation*}
\sigma_N^2(\mathbf f)\gtrsim N^2\sum_{k\geqslant i(N)+1}\frac 1{n_k}.
\end{equation*}
The condition $n_{k+1}\geqslant n_k^p$ gives boundedness in $\mathbb L^p$ of 
$\mathcal S_4$.
\end{itemize}
This concludes the proof of \ref{UI}.
\end{proof}

\begin{propo}\label{condition_for_non_tightness}
 Assume that $(n_k)_{k\geqslant 1}$ is such that $\mathcal S$ is uniformly integrable 
 and $\sum_kn_k^{-1}$ is convergent. Then for each $I\subset\N$ infinite, the 
 collection $\ens{\frac{S_N(\mathbf f)}{\sigma_N(\mathbf f)},N\in I}$ is
not tight in $\mathcal H$. Its finite-dimensional distributions converge to $0$ in 
probability. 

Furthermore, if $(c_N)_{N\geqslant 0}$ is a sequence of positive numbers going to 
infinity, we have either
  \begin{itemize}
  \item  $\lim_{N\to +\infty}\frac{\sigma_N(\mathbf f)}{c_N}=0$, hence 
  $\suite{\frac{S_N(\mathbf f)}{c_N}}{N\geqslant 1}$ converges to 
$\mathbf 0_{\mathcal H}$ in distribution, or
  \item  $\limsup_{N\to +\infty}\frac{\sigma_N(\mathbf f)}{c_N}>0$, and in this case 
  the sequence 
$\ens{\frac{S_N(\mathbf f)}{c_N},N\geqslant 1}$ is not tight. 
  \end{itemize} 
\end{propo}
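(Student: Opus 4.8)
The plan is to exploit the decomposition $\mathbf{f}=\sum_k f_k\mathbf{e_k}$, so that the $n$-th coordinate of $S_N(\mathbf{f})/\sigma_N(\mathbf{f})$ is exactly $S_N(f_n)/\sigma_N(\mathbf{f})$. First I would show that for each \emph{fixed} coordinate $n$, we have $\esper{\abs{S_N(f_n)}^2}=\sigma_N^2(f_n)\to\infty$ (from \eqref{standard_dev}, $\sigma_N^2(f_n)\asymp N$ once $N\geqslant n_n$), but $\sigma_N^2(f_n)/\sigma_N^2(\mathbf{f})\lesssim 1/i(N)\to 0$ by Proposition~\ref{growth_of_variance}. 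Hence $S_N(f_n)/\sigma_N(\mathbf{f})\to 0$ in $\mathbb{L}^2$, and more generally any finite block of coordinates goes to $\mathbf{0}$ in $\mathbb{L}^2$, so the finite-dimensional distributions converge to the Dirac mass at $0$; this handles the second sentence of the statement.

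Next I would argue non-tightness by contradiction. Suppose $\ens{S_N(\mathbf{f})/\sigma_N(\mathbf{f}),N\in I}$ were tight. By Prokhorov's theorem, extract a subsequence along which it converges in distribution to some $\mathcal{H}$-valued random variable $\mathbf{Y}$. Since the finite-dimensional projections converge to $0$ in probability, all coordinates of $\mathbf{Y}$ must be $0$ a.s., so $\mathbf{Y}=\mathbf{0}_{\mathcal{H}}$, i.e.\ the subsequence converges to $\mathbf{0}$ in distribution, hence in probability, hence $\norm{S_N(\mathbf{f})}_{\mathcal{H}}^2/\sigma_N^2(\mathbf{f})\to 0$ in probability along that subsequence. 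But the family $\mathcal{S}$ is uniformly integrable by hypothesis, so convergence in probability to $0$ upgrades to convergence in $\mathbb{L}^1$, which would force $\esper{\norm{S_N(\mathbf{f})}_{\mathcal{H}}^2/\sigma_N^2(\mathbf{f})}=1\to 0$, a contradiction. This proves the first assertion.

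For the dichotomy about an arbitrary sequence $c_N\to\infty$: if $\sigma_N(\mathbf{f})/c_N\to 0$, then $S_N(\mathbf{f})/c_N=(\sigma_N(\mathbf{f})/c_N)\cdot(S_N(\mathbf{f})/\sigma_N(\mathbf{f}))$ has $\mathbb{L}^2$-norm of its $\mathcal{H}$-norm equal to $\sigma_N(\mathbf{f})/c_N\to 0$, so it converges to $\mathbf{0}_{\mathcal{H}}$ in $\mathbb{L}^2$ and a fortiori in distribution. Otherwise $\limsup_N \sigma_N(\mathbf{f})/c_N=:\ell>0$; pick a subsequence $N_j$ along which $\sigma_{N_j}(\mathbf{f})/c_{N_j}\to\ell$. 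If $\ens{S_N(\mathbf{f})/c_N,N\geqslant 1}$ were tight, so would be the subfamily indexed by $\ens{N_j}$, and then $S_{N_j}(\mathbf{f})/\sigma_{N_j}(\mathbf{f})=(c_{N_j}/\sigma_{N_j}(\mathbf{f}))\cdot(S_{N_j}(\mathbf{f})/c_{N_j})$ would be tight as well (the scalars $c_{N_j}/\sigma_{N_j}(\mathbf{f})$ converge to $1/\ell<\infty$), contradicting the non-tightness just established along the infinite set $I=\ens{N_j}$.

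The main obstacle I anticipate is not any single estimate but the clean passage from ``finite-dimensional distributions converge to $0$'' plus ``$\mathcal{S}$ uniformly integrable'' to ``not tight'': one must be careful that tightness in $\mathcal{H}$ together with fidi-convergence to a point genuinely pins down the limit as $\mathbf{0}_{\mathcal{H}}$, and then that uniform integrability is exactly what rules this out because $\esper{\norm{S_N(\mathbf{f})}_{\mathcal{H}}^2/\sigma_N^2(\mathbf{f})}\equiv 1$. The rest is bookkeeping with \eqref{standard_dev} and Proposition~\ref{growth_of_variance}.
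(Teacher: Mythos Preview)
Your proposal is correct and follows essentially the same strategy as the paper: show each coordinate $S_N(f_d)/\sigma_N(\mathbf{f})\to 0$ in $\mathbb{L}^2$, deduce that any weak subsequential limit must be $\mathbf{0}_{\mathcal{H}}$, and then use uniform integrability of $\mathcal{S}$ together with $\esper{\norm{S_N(\mathbf{f})}_{\mathcal{H}}^2/\sigma_N^2(\mathbf{f})}\equiv 1$ to obtain a contradiction. The only cosmetic differences are that the paper invokes the decomposition \eqref{expre_f_k} and Billingsley's Theorem~5.4 for the contradiction step, and handles the scaling in the second branch of the dichotomy via a convex compact set containing~$\mathbf{0}$ rather than via bounded scalar multipliers; the substance is identical.
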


\begin{proof} We first prove that the finite dimensional distributions of 
$\frac{S_N(\mathbf f)}{\sigma_N(\mathbf f)}$ converge weakly to $0$. 

For each $d\in\N$, we have 
 $\frac{\langle S_N(\mathbf f),\mathbf{e_d}\rangle_{\mathcal H}}{\sigma_N(\mathbf 
 f)}\to 0$ in distribution. 
Indeed, we have by \eqref{expre_f_k} that $\langle S_N(\mathbf f),
\mathbf{e_d}\rangle_{\mathcal H} =n_d\sum_{i=0}^{N-1}U^i\xi_d+(I-U^N)\sum_{i=1-
n_d}^{-1}(n_d+i)U^i\xi_d$. 
We conclude noticing that  
 $\sigma_N(\mathbf f)^{-1}(I-U^N)\sum_{i=1-n_d}^{-1}(n_d+i)U^i\xi_d$ goes to $0$ in 
 probability as $N$ goes to infinity, using Proposition~\ref{growth_of_variance} and 
 the estimate
 \begin{equation*}
 \mathbb  E\left(n_d\sum_{i=0}^{N-1}U^i\xi_d\right)^2=N\lesssim \frac{\sigma_N^2(
 \mathbf f)}{i(N)}
 \end{equation*} 

This can be extended replacing $\mathbf{e_d}$ by any $\mathbf{v}\in\mathcal H$ by an 
application of Theorem 4.2. in \cite{MR0233396}. By Proposition 4.15 
in \cite{MR576407}, the only possible limit is the Dirac measure at $\mathbf 
0_{\mathcal H}$.

Assume that the sequence $\ens{\frac{S_N(\mathbf f)}{\sigma_N(
\mathbf f)},N\geqslant 1}$ is tight. 
The sequence $\left(\frac{\norm{S_N(\mathbf{f})}_{\mathcal H}^2}
{\sigma^2_N(\mathbf{f})}\right)_{N\geqslant 1}$ is a uniformly integrable sequence 
of random variables of mean $1$. A weakly convergent subsequence would go to 
$\mathbf{0}_{\mathcal H}$. According to Theorem 5.4 in 
\cite{MR0233396}, we should have that the limit random variable has expectation $1$. 
This contradiction gives the result when $I=\N\setminus\ens 0$. Applying this 
reasonning to subsequences, one can see that for any infinite 
subset $I$ of $\N\setminus\ens 0$, the family $\ens{\frac{S_N(\mathbf f)}
{\sigma_N(\mathbf f)},N\in I}$ is not tight. 

Let $\suite{c_N}{N\geqslant 1}$ be a sequence of positive real numbers such that 
$\lim_{N\to +\infty}c_N=+\infty$. 
\begin{itemize}
\item first case: $\frac{\sigma_N(\mathbf f)}{c_N}$ converges to $0$. In this case, 
the sequence $\left(\frac{\norm{S_N(\mathbf f)}^2}{c_N^2}\right)_{N\geqslant 1}$ 
converges to $0$ in $\mathbb L^1$, hence the sequence 
$\left(\frac{S_N(\mathbf f)}{c_N}\right)_{N\geqslant 1}$ converges in distribution to 
$\mathbf 0_{\mathcal H}$.
\item second case: $\limsup_{N\to \infty}\frac{\sigma_N(\mathbf f)}{c_N}>0$. Hence 
there is some $r>0$ and a sequence of integers $l_i\uparrow\infty$ such that for each 
$i$, $\frac{\sigma_{l_i}(\mathbf f)}{c_{l_i}}\geqslant \frac 1r$, that is, 
$c_{l_i}\leqslant r\sigma_{l_i}(\mathbf f)$. 

Assume that the family $\ens{\frac{S_{l_i}(\mathbf f)}{c_{l_i}},i\geqslant 1}$ is 
tight. This means that given a positive 
$\varepsilon$, one can find a compact set $K=K(\varepsilon)$ such that for each $i$, 
$\mu\ens{\frac{S_{l_i}(\mathbf f)}{c_{l_i}}\in K}>1-\varepsilon$. We can assume that 
this compact set is convex and contains $0$ (we consider the closed convex hull of 
$K\cup\ens 0$, which is compact by Theorem 5.35 in \cite{MR2378491}). 
Then we have 
\begin{align*}
\ens{\frac{S_{l_i}(\mathbf f)}{c_{l_i}}\in K}
&=\ens{\frac{S_{l_i}(\mathbf f)}{\sigma_{l_i}(\mathbf f)}\in \frac{c_{l_i}}
{\sigma_{l_i}(\mathbf f)}K}\\
&\subset \ens{\frac{S_{l_i}(\mathbf f)}{\sigma_{l_i}(\mathbf f)}\in rK},
\end{align*}
and we would deduce tightness of $\ens{\frac{S_{l_i}(\mathbf f)}{\sigma_{l_i}
(\mathbf f)},i\geqslant 1}$, which cannot happen.

\end{itemize}
\begin{rem}
In the second case, it may happen that the finite dimensional distributions does not 
converge to degenerate ones, for example with $c_N:=N$.
\end{rem}
\end{proof}

\subsection{Proof of Theorem~\ref{thmA}}

Notice that if $n_{k+1}\geqslant n_k^p$ for some $p>1$ and $n_1=2$, then 
$n_k\geqslant 2^{p^k}$, hence the condition 
of Proposition~\ref{condition_moments} is fulfilled. We get \ref{expect} since each 
$f_k$ has expectation $0$.

We denote $\lfloor x\rfloor:=\sup\ens{k\in\Z, k\leqslant x}$ the integer part of the 
real number $x$.

\begin{propo}
Let $p>1$.
With $n_k:=\lfloor 2^{p^k}\rfloor$ (which satisfies \eqref{lacunarity}), we have for 
each positive integer $l$,
\begin{equation*}
\beta_{\mathbf X}(l)\lesssim \frac 1{l^{\frac 1p}}.
\end{equation*}
\end{propo}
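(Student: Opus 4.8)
The plan is to estimate the $\beta$-mixing coefficient $\beta_{\mathbf X}(l)$ by exploiting the representation of $\mathbf f$ as a sum of finitely-dependent blocks of independent noise variables. Recall that $\mathbf X_j = \mathbf f \circ T^j$, and each coordinate $f_k = \sum_{i=0}^{n_k-1} U^{-i}\xi_k$ depends only on the variables $U^{-i}\xi_k$ for $0 \le i \le n_k-1$, while by Lemma~\ref{exist_syst} the whole family $(U^i\xi_k, k\ge 1, i\in\Z)$ is mutually independent. First I would observe that $\beta_{\mathbf X}(l) = \beta(\f_{-\infty}^0, \f_l^{+\infty})$ splits coordinate-wise: since the $\sigma$-algebras generated by different coordinates $(f_k\circ T^j)_j$ are mutually independent, we get $\beta_{\mathbf X}(l) \le \sum_{k\ge 1} \beta_k(l)$, where $\beta_k(l)$ is the $\beta$-mixing coefficient of the real-valued stationary sequence $(f_k\circ T^j)_{j\in\Z}$. (Here one uses the standard fact that $\beta$-mixing coefficients of independent sequences add up; this is in Bradley's book.)

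Next I would bound each $\beta_k(l)$ in two regimes. When $l \ge n_k$: the past $\sigma$-algebra $\f_{-\infty}^0$ for the $k$-th coordinate is generated by $\{U^i\xi_k : i \le n_k - 1\}$ and the future $\f_l^{+\infty}$ by $\{U^i\xi_k : i \ge l\}$; since $l \ge n_k > n_k - 1$, these index sets are disjoint, hence the two $\sigma$-algebras are independent and $\beta_k(l) = 0$. When $l < n_k$: here there is genuine overlap, and one uses the crude bound that $\beta_k(l)$ is at most (a constant times) the probability that at least one of the ``shared'' noise variables is nonzero. More precisely, $f_k\circ T^j$ for $j\le 0$ and $f_k\circ T^j$ for $j\ge l$ both involve the variables $U^i\xi_k$ with $l - n_k + 1 \le i \le n_k - 1$ roughly; the number of such overlapping indices is $O(n_k)$, and each $\xi_k$ is nonzero with probability $u_k = n_k^{-2}$. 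By independence and a union bound, the probability that the overlap is ``active'' is $\lesssim n_k \cdot n_k^{-2} = n_k^{-1}$, and on the complementary event the past and future blocks of the $k$-th coordinate decouple. This yields $\beta_k(l) \lesssim n_k^{-1}$ for $l < n_k$, and more precisely one should be able to get $\beta_k(l) \lesssim (n_k - l) n_k^{-2}$ or at worst $n_k^{-1}$, which is what is needed.

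Combining, for a given $l$ let $k_0 = k_0(l)$ be the smallest $k$ with $n_k > l$, i.e. $n_{k_0-1} \le l < n_{k_0}$. Then $\beta_{\mathbf X}(l) \le \sum_{k \ge k_0} \beta_k(l) \lesssim \sum_{k\ge k_0} n_k^{-1}$. Since $n_k = \lfloor 2^{p^k}\rfloor$ grows doubly exponentially, the tail sum $\sum_{k\ge k_0} n_k^{-1}$ is dominated by its first term up to a constant, so $\beta_{\mathbf X}(l) \lesssim n_{k_0}^{-1}$. Finally I would convert this into the stated rate: from $n_{k_0 - 1} \le l$ and the lacunarity $n_{k_0} \ge n_{k_0-1}^p$ (using $n_k = \lfloor 2^{p^k}\rfloor$, one checks $n_{k_0} \gtrsim l^{1/p}$ — indeed $2^{p^{k_0}} = (2^{p^{k_0-1}})^p \ge $ a constant times $l^p$ is too strong; rather one wants $n_{k_0} \ge$ something like $l^{1/p}$: since $l < n_{k_0}$ trivially gives $n_{k_0} > l \ge l^{1/p}$, and actually $n_{k_0-1} \le l$ with $n_{k_0} \asymp n_{k_0-1}^p \le l^p$ bounds $n_{k_0}$ from above, while from $n_{k_0}^{1/p} \lesssim n_{k_0-1}^{?}$... ). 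The cleanest route: $n_{k_0} > l$ directly gives $n_{k_0}^{-1} < l^{-1}$, but the claim only asks for $l^{-1/p}$, which is weaker, so it follows a fortiori — actually we even need care because we want $n_{k_0}^{-1} \lesssim l^{-1/p}$, and $n_{k_0} > l \ge l^{1/p}$ suffices.

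The main obstacle I anticipate is the $l < n_k$ estimate of $\beta_k(l)$: one must argue carefully that conditionally on all the overlapping noise variables $U^i\xi_k$ being zero, the ``truncated'' past and future are exactly independent, so that the total variation distance controlling $\beta_k$ is bounded by the probability of the bad event; this requires writing $f_k \circ T^j$ explicitly (via \eqref{expre_f_k} or the block form) and identifying precisely which noise coordinates are shared between indices $j \le 0$ and $j \ge l$. A coupling or a direct partition argument from the definition of $\beta$ should work, but bookkeeping the index ranges is the delicate point. The other steps — additivity of $\beta$ over independent coordinates, summing the doubly-exponential tail, and the final arithmetic with $n_k = \lfloor 2^{p^k}\rfloor$ — are routine.
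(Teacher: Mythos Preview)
Your approach is essentially the paper's: subadditivity $\beta_{\mathbf X}(l)\leqslant\sum_k\beta_k(l)$, the observation that $\beta_k(l)=0$ for $l\geqslant n_k$, the bound $\beta_k(l)\leqslant\beta_k(0)\lesssim n_k^{-1}$ otherwise (the paper imports this as Lemma~5 of \cite{Giraudo20143769} rather than rederiving it, but your ``overlap inactive'' sketch is the right idea), and summing the doubly-exponential tail. Your closing arithmetic is needlessly tangled: you already have $n_{k_0}>l$, so $\beta_{\mathbf X}(l)\lesssim n_{k_0}^{-1}<l^{-1}\leqslant l^{-1/p}$ and you are done---indeed your route yields the stronger rate $l^{-1}$; the paper instead first bounds $\beta_{\mathbf X}(N)\leqslant\beta_{\mathbf X}(n_{i(N)})\lesssim n_{i(N)}^{-1}$ by monotonicity and then uses $n_{i(N)}\asymp n_{i(N)+1}^{1/p}>N^{1/p}$ to land exactly on $N^{-1/p}$.
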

\begin{proof}
We define $\beta_k(n)$ as the $n$-th $\beta$-mixing coefficient of the sequence 
$(f_k\circ T^i)_{i\geqslant 0}$. 

By Lemma~5 of \cite{Giraudo20143769}, we have the estimate $\beta_k(0)\leqslant 4n_k^{-1}$ for each 
$k$. Using then Proposition~4
of this paper (cf. \cite{MR2325294} for a proof), we get that 
$\beta_{\mathbf X}(n_k)\lesssim\sum_{j\geqslant k}\frac 1{n_j}$ for each integer 
$k$. Since $p^i\geqslant i$ for $i$ large enough,
\begin{equation*}
\sum_{j\geqslant k}\frac 1{n_j}=\sum_{i=0}^{+\infty}\frac 1{2^{p^{i+k}}}=
\sum_{i=0}^{+\infty}\frac 1{2^{p^ip^k}}\lesssim \sum_{i=0}^{+\infty}\frac 1{2^i}\frac 
1{2^{p^k}}=\frac 2{2^{p^k}},
\end{equation*}
we get
\begin{equation*}
\beta_{\mathbf X}(N)\leqslant \beta_X(n_{i(N)})\lesssim \frac 1{n_{i(N)}}=\frac 
1{n_{i(N)+1}^{1/p}}\leqslant \frac 1{N^{1/p}}.
\end{equation*} 
\end{proof}
This proves \ref{mixing}. For any $p$, the choice $n_k:=\lfloor 2^{p^k}\rfloor$ 
satisfies the condition of Proposition~\ref{condition_for_UI}, which proves \ref{UI}. 
We conclude the proof by Proposition~\ref{condition_for_non_tightness}.

\begin{rem}
For each of these choices, $\sigma^2_N(\mathbf f)$ behaves asymptotically like 
$N\log\log N$. Theorem~\ref{thmA'} shows that 
we can construct a process which satisfies the same asymptotic behavior of partial 
sums and has a variance close to a linear one.

A question would be: can we construct a strictly stationary sequence with all the
properties of Theorem~\ref{thmA}, except \ref{div_dev} which is replaced by an 
assumption of linear variance?
\end{rem}

\subsection{Proof of Theorem~\ref{thmA'}}

Let $\suite{h_N}{N\geqslant 1}$ be the sequence involved in Theorem~\ref{thmA'}. We 
define for an integer $u$ the quantity $h^{-1}(u):=\inf\ens{j\in\N,h_j\geqslant u}$. 

If $(b_k)_{k\geqslant 1}$ is the given sequence (that can be assumed decreasing), we 
define inductively 
\begin{equation}
 n_{k+1}:=\max\ens{n_k^2,\lfloor\frac{2^k}{b_{n_k}}\rfloor,h^{-1}(k)}.
\end{equation}
Let $N$ be an integer. We assume without loss of generality that the growth of the 
sequence $(h_N)_{N\geqslant 1}$ is slow enough in order to guarantee that there 
exists $k$ such that $N=h^{-1}(k)$. We then have $i(N)\leqslant 
k+1\leqslant h_N+1$, hence using Proposition~\ref{growth_of_variance}, we get b'). 

We have $n_k\geqslant 2^{2^k}$ hence by a similar argument as in the proof of 
Theorem~\ref{thmA}, \ref{expect} is satisfied. 

By a similar argument as in \cite{Giraudo20143769}, we get $\beta_{
\mathbf X}(n_k)\leqslant b_{n_k}$, hence c') holds. 

\begin{rem}
By \eqref{DMR_cond}, we cannot expect the relationship $\beta_{\mathbf X}
(\cdot)\leqslant b_{\cdot}$ for the whole sequence. 
\end{rem}

Since for each $k$, $n_{k+1}\geqslant n_k^2$, Proposition~\ref{condition_for_UI} 
and~\ref{condition_for_non_tightness} apply. This concludes the proof of 
Theorem~\ref{thmA'}. 

\begin{proof}[Proof of Lemma~\ref{exist_syst}]
Let $\Omega:=[0,1]^{\N^*\times\Z}$, where $[0,1]$ is endowed with Borel $\sigma$-
algebra and Lebesgue measure, and $\Omega$ with the product structure. 

For $(k,j)\in \N^*\times\Z$ and $S\subset [0,1]$, let $P_{k,j}
(S):=\prod_{(i_1,i_2)\in \N^*\times\Z}S_{i_1,i_2}$, 
where $S_{i_1,i_2}=S$ if $(i_1,i_2)=(k,j)$ and $[0,1]$ otherwise. Then we define 

\begin{equation*}
A_{k,j}^{+}:=P_{k,j}([0,2^{-1}(u_k)^{-1}]),
\end{equation*}

\begin{equation*}
A_{k,j}^{-}:=P_{k,j}([2^{-1}(u_k)^{-1},(u_k)^{-1}]),
\end{equation*}

\begin{equation*}
A_{k,j}^{(0)}:=P_{k,j}([(u_k)^{-1},1]),
\end{equation*}

the map $T$ by  $T\left(\suite{x_{k,j}}{(k,j)\in \N^*\times\Z}\right):=
\suite{x_{k,{j+1}}}{(k,j)\in \N^*\times\Z}$, and 
\begin{equation*}
\xi_k:=\chi_{A_{k,0}^+}-\chi_{A_{k,0}^-}.
\end{equation*}
\end{proof}


\textbf{Acknowledgments.} The authors would like to thank both 
referees for helpful comments, and for suggesting
Remark~\ref{remark_on_thm_MP06}.





\end{document}